\documentclass[12pt,a4paper]{amsart}
\usepackage{amsaddr}
\usepackage{amsmath,amscd}
\usepackage{latexsym,amsmath,amssymb,mathrsfs,setspace,enumerate,tikz}
\usepackage{graphicx}
\usepackage{subcaption}
\usepackage[T1]{fontenc}
\usepackage[utf8]{inputenc}
\usepackage{lmodern}
\usepackage{amssymb}
\usepackage{multicol}
\usepackage{tikz-network}
\usetikzlibrary{automata,positioning}
\usepackage[english]{babel} 
\usepackage{blindtext}

\theoremstyle{plain}
\newtheorem{theorem}{Theorem}[section]

\newtheorem{proposition}[theorem]{Proposition}

\theoremstyle{definition}
\newtheorem{definition}[theorem]{Definition}

\newtheorem{remark}[theorem]{Remark}
\newtheorem{example}[theorem]{Example}
 
\DeclareMathOperator{\dom}{dom}

\begin{document}
	
	\title[Ordered Semigroups and categories of ideals]{Ordered semigroups and ideals categories of  principal ideal rings}
	
			\author[ P. K. Minnumol and P. G. Romeo]{P. K. Minnumol and P. G. Romeo}
			
	\address{Department of Mathematics\\
		Cochin University of Science and Technology(CUSAT)\\Kochi, Kerala, India, 682022}
	\email{$pkminnumol@gmail.com,\, romeo_-parackal@yahoo.com$}
	
%\author[Minnumol P K]{Minnumol P K}
%\address{Research Scholar\\Department of Mathematics\\
%	Cochin University of Science and Technology(CUSAT)\\Kochi, Kerala, India, 682022}
	\thanks{First author wishes to thank Cochin University of Science And Technology for providing financial support.\\
		Second author wishes to thank Council of Scientific and Industrial Research(CSIR) INDIA, for providing financial support.
		 }
	\subjclass{13F10, 20M17, 20M14, 06F05}
	\keywords{ordered semigroup, ideals, principal ideal ring, category}

\begin{abstract}
	In this paper, we view the collection of ideals of a commutative principal ideal ring from two perspectives: one as an ordered semigroup $\mathcal{I}(R)$ and the other as a category $\mathbb{I}_R$. It is shown that $\mathcal{I}(R)$ is a regular ordered semigroup whereas $\mathbb{I}_R$ is a category with subobjects. Further we establish the correspondence between these structures.
\end{abstract}
\maketitle
An ordered semigroup is a semigroup equipped with a compatible partial order, there are various investigations carried out in ordered semigroups. 
Some authors have studied partial orders on regular semigroups; an ordered semigroup of this type is called an \emph{ordered regular semigroup}. 
On the other hand, some authors have studied regularity within ordered semigroups; such a notion is referred to as \emph{ordered regularity}, and semigroups in which every element is ordered regular are called \emph{regular ordered semigroups}. 
Since in an ordered semigroup every regular element is ordered regular the class of regular ordered semigroups is much larger than the class of ordered regular semigroups. 
Here we construct the ordered semigroup structure as well as the categorical structure of ideals of a commutative principal ideal ring.  We establish the correspondence between these structures.

\section{Preliminaries}
First we recall some definitions and basic results regarding ordered semigroups, rings and category theory needed in this paper.
\subsection{Ordered semigroups}

An \textit{ordered semigroup}  $(S,\cdot,\leq)$ is a partially ordered set $(S,\leq)$ together with a semigroup structure $(S,\cdot)$ such that the order is compatible with multiplication; \cite{txtbk}. 
For a subset $A \subseteq S$, the set
\[
(A] = \{x \in S \mid (\exists a \in A)\; x \leq a\}
\]
is called the \textit{downward closure} of $A$. A subset $A$ is said to be \textit{downward closed} if $(A] = A$.

\begin{definition}\cite{keharegularity}
	An element $a \in S$ is said to be \textit{ordered regular} if there exists $x \in S$ such that
	$a \leq axa$ or equivalently $a \in (aSa]$, ordered regular elements of $S$ is denoted by $Reg_{\leq}(S)$
\end{definition}
When $Reg_{\leq}(S)=S$, then $S$ is called a \textit{regular ordered semigroup}.
\begin{example}
Consider $(\mathbb{N},\cdot,\leq)$, where $\mathbb{N}$ denotes the set of natural numbers with the usual multiplication and order. Clearly, $1$ is the only regular element in the classical sense, but for every $n \in \mathbb{N}$, there exists $x \in \mathbb{N}$ such that $n \leq nxn$, hence, every element of $\mathbb{N}$ is ordered regular and $(\mathbb{N},\cdot,\leq)$ is a regular ordered semigroup.
\end{example}

An ordered semigroup $S$ is called \textit{intra-regular}  if for each $a \in S$ there exist $x,y \in S$ such that
$a \leq xa^{2}y$
or equivalently $a \in (Sa^{2}S]$, \cite{intrregular}.
An element $a' \in S$ is said to be an \textit{ordered inverse}  of $a$ if
$a \leq aa'a \quad \text{and} \quad a' \leq a'aa'.$
The set of all ordered inverses of $a$ is denoted by $V_{\leq}(a)$, \cite{jamadar}.
An ordered semigroup $S$ is called an \textit{inverse ordered semigroup} if for each $a \in S$, any two ordered inverses of $a$ are $\mathcal{H}$-related.
An element $e \in S$ is called an \textit{ordered idempotent} if $e \leq e^{2}$.

The following are some important classes of regular ordered semigroups.
\begin{itemize}
	\item an ordered semigroup $S$ is \textit{group-like} if for all $a,b \in S$ there exist $x,y \in S$ such that 	$a \leq xb$ and $a \leq by$.
	\item an ordered semigroup $S$ is \textit{completely regular} if for every $a \in S
	a \in (a^{2}Sa^{2}].$
	\item a regular ordered semigroup $S$ is called a \textit{Clifford ordered semigroup} if for all $a \in S$ and $e \in E_{\leq}(S)$ there exist $u,v \in S$ such that
	$ae \leq eua \quad \text{and} \quad ea \leq ave.$
	
\end{itemize} 
A nonempty subset $A$ of $S$ is called a \textit{left ideal} ( \textit{right ideal}) of $S$ if
$SA \subseteq A$ ($AS \subseteq A$) and $(A]=A$.
A subset $A$ is called an \textit{ideal} of $S$ if it is both a left and a right ideal.
The principal left ideal, principal right ideal and principal ideal generated by $a \in S$ are denoted by $L(a)$, $R(a)$, and $I(a)$, respectively and are defined in the usual manner \cite{kahaloideal}.
\begin{center}
	$ L(a) = (a \cup Sa] $\\
	$ R(a) = (a \cup aS] $\\
	$ I(a) = (a \cup Sa \cup aS \cup SaS] $\\
	
\end{center}
The \textit{Green's relations}  $ \mathscr{L} ,\mathscr{R}, \mathscr{J}, \mathscr{H}, \mathscr{D}  $ on an ordered semigroup $ S $ are given as follows:
\begin{center}
	$ a\,\mathscr{L} \,b $ if and only if $ L(a) = L(b) $\\
	$ a\,\mathscr{R} \,b $ if and only if $ R(a) = R(b) $\\
	$ a\,\mathscr{J} \,b $ if and only if $ I(a) = I(b) $\\
	$ \mathscr{H} = \mathscr{L} \cap \mathscr{R} $\\
	$ \mathscr{D} = \mathscr{L} \vee \mathscr{R} $
\end{center}
If $ S $ is a regular ordered  semigroup for each $ a \in{S} $ there exists $ x \in{S} $ such that $ a \leq axa$, hence $ a \in (Sa] $ and 
$ a \in (aS] $ and so the Green's equivalence for regular ordered semigroups are:
\begin{center}
	$ a\,\mathscr{L} \,b $ if and only if $ (Sa]  = (Sb] $\\
	$ a\,\mathscr{R} \,b $ if and only if $ (aS]  = (bS] $\\
\end{center}
The following theorem characterizes Green’s relations in a regular ordered semigroup.
\begin{proposition}(cf.\cite{mpk})
	Let $ a $ and $ b $ be elements of a regular ordered semigroup $ S $. Then 	$ a\,\mathscr{L} \,b $ if and only if there exist $ x,y \in{S} $ such that $ a \leq xb $ and $ b \leq ya $. Also $ a\,\mathscr{R} \,b $ if and only if there exist $ u,v \in{S} $ such that $ a \leq bu $ and $ b \leq av $.
\end{proposition}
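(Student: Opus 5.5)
We prove the statement for $\mathscr{L}$; the $\mathscr{R}$ case is the left–right dual, obtained by reversing all products. Throughout we use the observation made just before the statement: in a regular ordered semigroup, $a\,\mathscr{L}\,b$ if and only if $(Sa]=(Sb]$. This holds because $a \le axa$ gives $a\in(Sa]$, so $L(a)=(a\cup Sa]=(Sa]$. The plan is to translate the equality of downward closures into the two inequalities, and back.

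For the forward direction, suppose $a\,\mathscr{L}\,b$, so that $(Sa]=(Sb]$. By regularity there is $z\in S$ with $a\le aza$, so $a\in(Sa]$ (take $x'=az$). Hence $a\in(Sb]$, which means $a\le xb$ for some $x\in S$. By symmetry, using $b\le bwb$, we get $b\le ya$ for some $y\in S$.

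For the converse, suppose $a\le xb$ and $b\le ya$. We show $(Sa]\subseteq(Sb]$; the reverse inclusion is symmetric.
\begin{itemize}
\item Take $t\in(Sa]$, so $t\le sa$ for some $s\in S$.
\item Compatibility of the order with multiplication gives $sa\le s(xb)=(sx)b$.
\item By transitivity, $t\le (sx)b$, so $t\in(Sb]$.
\end{itemize}
Hence $(Sa]=(Sb]$, that is, $L(a)=L(b)$ and $a\,\mathscr{L}\,b$.

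The only delicate point is making sure that $L(a)$, originally defined as $(a\cup Sa]$, really equals $(Sa]$. This is exactly where regularity enters, and the remark preceding the proposition already records it. Everything else is transitivity together with compatibility of the order with multiplication. For $\mathscr{R}$ the same argument runs with $a\le au$ and $b\le bv$ style inequalities: from $(aS]=(bS]$ and $a\in(aS]$ we get $a\le bu$ and $b\le av$, and conversely $t\le as\le (bu)s=b(us)$.
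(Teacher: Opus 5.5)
Your proof is correct and is essentially the argument the paper intends: the paper gives no separate proof and cites the source, but the remark just before the statement makes exactly your reduction $L(a)=(Sa]$, $R(a)=(aS]$ via $a\le axa$. After that reduction, transitivity and compatibility of the order finish both directions as you do. One aside: your converse holds in any ordered semigroup, since $t\le a\le xb$ already puts the generator $a$ into $(Sb]$, so regularity is needed only for the forward implication.
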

The inverse transversals for regular ordered semigroups. is defined below.  
\begin{definition}\label{definvtrsl}(cf.\cite{mpkinverse})
	Let $(S,\cdot,\leq)$ be a regular ordered semigroup. A subset $S_{0}\subseteq S$
	is called an inverse transversal\index{Inverse transversal} of $S$ if the following conditions hold:
	\begin{enumerate}
		\item $S_{0}$ is an inverse ordered subsemigroup of $S$;
		\item $(S_{0}] = S_{0}$;
		\item $V_{\le}(a)\cap S_{0} \neq \varnothing$ for every $a \in S$;
		\item if $a',a'' \in V_{\le}(a)\cap S_{0}$, then $a' \,\mathcal{H}\, a''$ in $S_{0}$.
	\end{enumerate}
\end{definition}
\subsection{Rings and ideals}
A ring is a set $R$ equipped with two operations  addition $+$ and  multiplication $\cdot$ such that $(R,+)$ is an abelian group and $(R,\cdot)$ is a semigroup and multiplication distributes over addition. 
A subset $I$ of $R$ is called a \emph{left (right) ideal} of $R$ if $I$ is a subgroup of $(R,+)$ and $RI \subseteq I (IR \subseteq I)$ and a \emph{two-sided ideal} if it is both a left
ideal and a right ideal of $R$ and an ideal (left, right, or two-sided) $I$ of a ring $R$ is said to be finitely generated if there exists a finite subset $X=\{x_1,\dots,x_n\}$ of $I$ such that $I$ is generated by $X$. When $I$ is generated by a single element then it is called a principal ideal.
\begin{definition}
	A ring $R$ is called Noetherian if every ideal of $R$ is finitely generated.
\end{definition}
\begin{definition}(cf.\cite{musili})
	A ring $R$ is called a principal ideal ring (PIR)\index{Principal ideal ring} if every ideal of $R$ is principal.
\end{definition}
\subsection{Category theory}	
	A \textit{category} $ \mathcal C $ consisting of a class of  objects  written as $\nu {\mathcal C}$  and collection of morphisms $ f \in \mathcal C(A,B)$ from each object $ A = dom\; f $  to each object $ B = cod\; f $. For each pair $ (f,g) $ of morphisms with $ dom\; g=cod\; f $, a morphism $fg : \dom f \, \rightarrow\,cod\; g$ is the composition  and for each object $ A $ there exist a  unique morphism   $  1_A \in \mathcal C( A,A ) $ is  called the identity morphism on $A$. Further the composition satisfies $ f(gh) = (fg)h $ whenever defined and $  1_A f = f = f 1_B  $ for all $f \in  \mathcal C ( A,B) $.\\
The maps between categories are called functors. For categories $\mathcal{C}$ and $ \mathcal{D}  $
	a \emph{contravariant functor} $F : \mathcal{C} \to \mathcal{D}$ assigns to each object 
$A \in \nu\mathcal{C}$ an object $F(A) \in \nu\mathcal{D}$ and to every morphism 
$f : A \to B$ in $\mathcal{C}$ a morphism
$F(f) : F(B) \to F(A)$
in $\mathcal{D}$.
These assignments satisfy the following conditions:
\begin{itemize}
	\item For every object $A$ of $\mathcal{C}$,
	$	F(1_A) = 1_{F(A)}.$
	
	\item For any pair of composable morphisms $f : A \to B$ and $g : B \to C$ in $\mathcal{C}$,
	\[
	F(fg) = F(g) F(f) .
	\]
\end{itemize}
\begin{definition}
	A morphism \(f : A \to B\) is called a \emph{monomorphism}\index{Monomorphism} if, for any
	morphisms \(h',k' : D \to A\), the condition \(h'f = k'f\) implies \(h' = k'\).
	Thus monomorphisms are precisely the right-cancellable morphisms.
\end{definition}
A preorder $\mathcal {P}$ is a category such that for any $p, p' \in \nu \mathcal {P}$, the
hom-set $\mathcal {P}(p,p')$ contains at most one morphism. In this case, there is a
quasi-order relation $\preceq$ on $\nu \mathcal {P}$ defined by
\[
p \preceq p' \;\Longleftrightarrow\; \mathcal {P}(p,p') \neq \varnothing.
\]
The preorder $P$ is said to be \emph{strict} if $\preceq$ is a partial order (see cf.~\cite{kss}).
\begin{definition}(cf.\cite{kss})\label{suboject}
	Let $\mathcal{C}$ be a category and $\mathcal{P}$ a subcategory of $\mathcal{C}$.
	The pair $(\mathcal{C}, \mathcal{P})$ is called a category with subobjects\index{Subobject category}
	if the following conditions hold:
	\begin{itemize}
		\item $\mathcal{P}$ is a strict preorder with $\nu\mathcal{C} = \nu\mathcal{P}$;
		\item every morphism in $\mathcal{P}$ is a monomorphism;
		\item if $f, g \in \mathcal{P}$ and $f =  hg $ for some $h \in \mathcal{C}$,
		then $h \in \mathcal{P}$.
	\end{itemize}
	For objects $C, D \in \nu\mathcal{C}$, we denote the unique morphism in
	$\mathcal{P}$ from $C \to D$ by $j_{(C, D)}$, called the inclusion.
	In this case $C$ is referred to as a subobject of $D$.
	
\end{definition}
%\begin{definition}(cf.\cite{kss})	Let $\mathcal{C}$ be a category with subobjects.	A canonical factorization of a morphism $f$ in $\mathcal{C}$	is a factorization of the form	\[	f = qj ,	\]	where $q$ is an epimorphism and $j$ is an inclusion.\end{definition}	

\section{Ordered semigroup of ideals of a principal ideal ring.}\label{section}

In this section, we establish that the set of all ideals of a commutative principal ideal ring carries a natural structure of a regular ordered semigroup, where the semigroup operation is ideal multiplication and the partial order is determined by the divisibility of generators. This order is compatible with the multiplication and every element of the semigroup is ordered regular.

Let $R$ be a commutative principal ideal ring with unity and $\mathcal{I}(R)$ the set of all ideals of $R$. 
Then for every $A \in \mathcal{I}(R)$, there exists $a \in R$ such that
\[
A = \langle a \rangle = \{ra \mid r \in R\}.
\]
For $A, B \in \mathcal{I}(R)$, the product of ideals $A$ and $B$ is defined by
\[
AB = \left\{ \sum_{i=1}^{n} a_i b_i \;\middle|\; a_i \in A,\ b_i \in B,\ n \in \mathbb{N} \right\},
\]
clearly, equipped with this operation $\mathcal{I}(R)$ is a semigroup.
For, consider principal ideals $A = \langle a \rangle$ and $B = \langle b \rangle$, then
\[
\langle a \rangle \langle b \rangle 
= \left\{ \sum_{i=1}^{n} r_i a\, s_i b \;\middle|\; r_i, s_i \in R,\ n \in \mathbb{N} \right\}.
\]
When $R$ is commutative the product of principal ideals simplifies to
\[
AB = \langle a \rangle \cdot \langle b \rangle = \langle ab \rangle.
\]
Indeed, for any $r \in R$, we have $r(ab) = (ra)b \in \langle a\rangle\langle b\rangle$, which shows that
$\langle ab\rangle \subseteq \langle a\rangle\langle b\rangle$.
On the other hand, each element
$ x \in \langle a\rangle\langle b\rangle $
can be written as a finite sum $\sum_{i=1}^n (r_i a)(s_i b)$ with $r_i,s_i \in R$,
 hence $x \in \langle ab\rangle$, and,
$\langle a\rangle\langle b\rangle = \langle ab\rangle.$

For $A  = \langle a \rangle, B = \langle b \rangle \in \mathcal{I}(R)$, define the order relation $\preccurlyeq$ as follows: 
\[
A \preccurlyeq B \iff a \mid b \;\text{in}\; R,
\]
then $\preccurlyeq$ is a partial order on $\mathcal{I}(R)$, and is compatible with multiplication. For,  $A = \langle a \rangle$, since $a \mid a, \quad a = a \cdot 1$ and so $A \preccurlyeq A$.
Suppose $A \preccurlyeq B$ and $B \preccurlyeq A$, then $a \mid b$ and $b \mid a$, ie., there exist
$x,y \in R$ such that $b=ax$ and $a=by$. It follows that
$\langle a\rangle \subseteq \langle b\rangle$ and
$\langle b\rangle \subseteq \langle a\rangle$ and hence
$\langle a\rangle=\langle b\rangle$, and so $A =B$.
When $A \preccurlyeq B$ and $B \preccurlyeq C$, where $C = \langle c \rangle$, then $a \mid b$ and $b \mid c$, so there exist $x, y \in R$ such that
$b = ax $ and $ c = by,$  hence $a \mid c$ and therefore  $A \preccurlyeq C$, ie., the relation 
$\preccurlyeq$ is a partial order on $\mathcal{I}(R)$.
Also since $R$ is commutative, it easily follows that $\preccurlyeq$ is compatible with the semigroup multiplication on $\mathcal{I}(R)$.

Finally, we establish ordered regularity. Let $A \in \mathcal{I}(R)$. Then there exists $a \in R$ such that $A = \langle a \rangle$. 
For any $x \in R$, we have $a \mid axa$. Let $X = \langle x \rangle$. Then $
\langle axa \rangle = \langle a \rangle \langle x \rangle \langle a \rangle = AXA.$
Thus $A \preccurlyeq AXA$.
Therefore, each element of $\mathcal{I}(R)$ is ordered regular. Hence the triple $(\mathcal{I}(R), \cdot, \preccurlyeq)$ forms a regular ordered semigroup.
The preceding results are collected in the following theorem.

\begin{theorem}
	Let $R$ be a commutative principal ideal ring with unity and $\mathcal{I}(R)$ denote the set of all ideals of $R$. 
	Define a binary operation on $\mathcal{I}(R)$ by ideal multiplication and define a relation $\preccurlyeq$ on $\mathcal{I}(R)$ by for  $A  = \langle a \rangle, B = \langle b \rangle \in \mathcal{I}(R)$
	\[
	A \preccurlyeq B \iff a \mid b \; \text{in}\; R,
	\]
	Then $\big(\mathcal{I}(R), \cdot, \preccurlyeq\big)$ is a regular ordered semigroup.
\end{theorem}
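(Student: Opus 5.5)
The plan is to verify, in order, the three ingredients of the definition: $(\mathcal{I}(R),\cdot)$ is a semigroup, $\preccurlyeq$ is a well-defined partial order compatible with multiplication, and every element is ordered regular.

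\textbf{Semigroup.} I will use the formula $\langle a\rangle\langle b\rangle=\langle ab\rangle$ established above for commutative $R$. The product of two ideals is again an ideal. Associativity follows from $(\langle a\rangle\langle b\rangle)\langle c\rangle=\langle (ab)c\rangle=\langle a(bc)\rangle=\langle a\rangle(\langle b\rangle\langle c\rangle)$.

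\textbf{Well-definedness of $\preccurlyeq$.} This is the step I expect to need the most care, since the relation is stated in terms of generators, which are not unique. The key observation is
\[
a\mid b \iff b\in\langle a\rangle \iff \langle b\rangle\subseteq\langle a\rangle .
\]
So $A\preccurlyeq B$ is equivalent to $B\subseteq A$, which depends only on the ideals and not on the chosen generators. Hence $\preccurlyeq$ is reverse inclusion. Reflexivity, antisymmetry and transitivity are then inherited from $\subseteq$; alternatively one can repeat the direct divisibility argument given above.

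\textbf{Compatibility.} Suppose $a\mid b$, say $b=at$. Then for any $c$,
\[
bc=(ac)t \quad\text{and}\quad cb=(ca)t,
\]
using commutativity. This gives $AC\preccurlyeq BC$ and $CA\preccurlyeq CB$.

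\textbf{Ordered regularity.} Given $A=\langle a\rangle$, take $X=\langle 1\rangle=R$; in fact any $X=\langle x\rangle$ works. Then $AXA=\langle axa\rangle$, and $a\mid axa$, so $A\preccurlyeq AXA$. Thus every element lies in $Reg_{\leq}(\mathcal{I}(R))$, and $\mathcal{I}(R)$ is a regular ordered semigroup.

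Beyond the well-definedness point, no step is a real obstacle; everything reduces to the identity $\langle a\rangle\langle b\rangle=\langle ab\rangle$ and the translation of divisibility into ideal containment.
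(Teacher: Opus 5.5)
Your proposal is correct and follows essentially the same route as the paper: the identity $\langle a\rangle\langle b\rangle=\langle ab\rangle$ gives the semigroup structure, divisibility gives the order and its compatibility, and $a\mid axa$ gives ordered regularity. Your observation that $A\preccurlyeq B$ is just $B\subseteq A$ streamlines the argument: it yields well-definedness (which the paper only checks in a later remark) and the partial-order axioms together, whereas the paper verifies reflexivity, antisymmetry and transitivity directly from divisibility.
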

\begin{remark}
	Suppose that $A, B \in \mathcal{I}(R)$ admit multiple generators, say 
	$A = \langle a \rangle = \langle a' \rangle$ and 
	$B = \langle b \rangle = \langle b' \rangle$. 
	If $a \mid b$ then $a' \mid b'$.  Suppose that $a \mid b$, then $b = ra$ for some $r \in R$, since $b' \in \langle b \rangle$, there exists $x \in R$ such that $b' = xb$. 
	Also, $a \in \langle a' \rangle$, implies there exists $y \in R$ such that $a = ya'$ and hence,
	\[
	b' = xb = x(ra) = xr(ya') = (xry)a',
	\]
	which shows that $a' \mid b'$.
\end{remark}
\begin{remark} When the ring $R$ is not commutative 
$\langle a\rangle \langle b\rangle = \langle ab\rangle$
does not  hold. 
\end{remark}
\begin{example}
	Consider the ring 
	\[
	R=\left\{ 
	\begin{pmatrix}
		a & b\\
		0 & c
	\end{pmatrix}
	\;\middle|\; a,b,c \in F
	\right\}
	\]
	of all upper triangular $2\times 2$ matrices over a field $F$ and the two sided ideals
	\[
	I_{1}
	= \left\langle 
	\begin{pmatrix}
		1 & 0\\
		0 & 0
	\end{pmatrix}
	\right\rangle
	= \left\{
	\begin{pmatrix}
		a & b\\
		0 & 0
	\end{pmatrix}
	\;\middle|\; a,b \in F
	\right\}
	\]
	and
	\[
	I_{2}
	= \left\langle 
	\begin{pmatrix}
		0 & 0\\
		0 & 1
	\end{pmatrix}
	\right\rangle
	= \left\{
	\begin{pmatrix}
		0 & c\\
		0 & d
	\end{pmatrix}
	\;\middle|\; c,d \in F
	\right\}, 
	\]
	
	then
	\[
	I_{1} I_{2}
	= \left\langle 
	\begin{pmatrix}
		1 & 0\\
		0 & 0
	\end{pmatrix}
	\right\rangle
	\left\langle 
	\begin{pmatrix}
		0 & 0\\
		0 & 1
	\end{pmatrix}
	\right\rangle
	= \left\{
	\begin{pmatrix}
		0 & a\\
		0 & 0
	\end{pmatrix}
	\;\middle|\; a \in F
	\right\}
	= \left\langle
	\begin{pmatrix}
		0 & 1\\
		0 & 0
	\end{pmatrix}
	\right\rangle.
	\]
	
	On the other hand,
	\[
	\left\langle
	\begin{pmatrix}
		1 & 0\\
		0 & 0
	\end{pmatrix}
	\begin{pmatrix}
		0 & 0\\
		0 & 1
	\end{pmatrix}
	\right\rangle
	=
	\left\langle
	\begin{pmatrix}
		0 & 0\\
		0 & 0
	\end{pmatrix}
	\right\rangle,
	\]
	
	hence,
	\[
	\left\langle
	\begin{pmatrix}
		1 & 0\\
		0 & 0
	\end{pmatrix}
	\right\rangle
	\left\langle
	\begin{pmatrix}
		0 & 0\\
		0 & 1
	\end{pmatrix}
	\right\rangle
	\neq
	\left\langle
	\begin{pmatrix}
		1 & 0\\
		0 & 0
	\end{pmatrix}
	\begin{pmatrix}
		0 & 0\\
		0 & 1
	\end{pmatrix}
	\right\rangle.
	\]
\end{example}

Next we investigate some properties of regular ordered semigroups of ideals of commutative principal ideal rings.

\begin{theorem}
	Let $R$ and $S$ be commutative principal ideal rings with unity.
	If $R$ and $S$ are isomorphic then the semigroups
	$(\mathcal{I}(R), \cdot , \preccurlyeq_{R})$ and
	$(\mathcal{I}(S), \cdot , \preccurlyeq_{S})$ are isomorphic.
\end{theorem}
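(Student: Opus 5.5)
Let $\phi\colon R\to S$ be a ring isomorphism. The plan is to define $\Phi\colon \mathcal{I}(R)\to\mathcal{I}(S)$ by $\Phi(A)=\phi(A)$, the image set, and then verify three things. First, $\Phi$ is a well-defined bijection. Second, it preserves ideal multiplication. Third, it preserves and reflects the order $\preccurlyeq$.

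For the first point, I will show that $\phi$ carries ideals to ideals. If $A$ is an ideal of $R$, then $\phi(A)$ is an additive subgroup of $S$. Given $s\in S$ and $\phi(a)\in\phi(A)$, write $s=\phi(r)$; then $s\phi(a)=\phi(ra)\in\phi(A)$. Hence $\phi(A)\in\mathcal{I}(S)$. In generator form, if $A=\langle a\rangle$, then
\[
\phi(A)=\{\phi(r)\phi(a)\mid r\in R\}=\langle\phi(a)\rangle,
\]
using surjectivity of $\phi$. The same argument applied to $\phi^{-1}$ produces a map $\Psi\colon\mathcal{I}(S)\to\mathcal{I}(R)$. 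Since $\phi^{-1}(\phi(A))=A$ and $\phi(\phi^{-1}(B))=B$, $\Phi$ is a bijection with inverse $\Psi$. Well-definedness is automatic because $\Phi$ is defined on the set $A$ itself and not on a chosen generator. The generator description will still be needed later; the Remark after the first Theorem guarantees that $\preccurlyeq$ does not depend on which generators are chosen.

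For multiplicativity, take $A=\langle a\rangle$ and $B=\langle b\rangle$. By the computation preceding the first Theorem, $AB=\langle ab\rangle$. Therefore
\[
\Phi(AB)=\langle\phi(ab)\rangle=\langle\phi(a)\phi(b)\rangle=\langle\phi(a)\rangle\langle\phi(b)\rangle=\Phi(A)\Phi(B),
\]
where the third equality uses the same identity in $S$, which is valid because $S$ is commutative.

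For the order, $A\preccurlyeq_R B$ means $a\mid b$, that is, $b=ra$ for some $r\in R$. Applying $\phi$ gives $\phi(b)=\phi(r)\phi(a)$, so $\phi(a)\mid\phi(b)$ and hence $\Phi(A)\preccurlyeq_S\Phi(B)$. Conversely, if $\phi(b)=s\phi(a)$, apply $\phi^{-1}$ to get $b=\phi^{-1}(s)\,a$. Thus $\Phi$ is an order isomorphism, and so an isomorphism of ordered semigroups.

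None of these steps is a real obstacle. The only point needing care is independence of generators, and it is already handled: $\Phi$ is defined on sets, and the Remark shows $\preccurlyeq$ is independent of the generators chosen.
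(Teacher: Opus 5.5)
Your proposal is correct and follows essentially the paper's proof: transport along $\phi$ using $\phi(\langle a\rangle)=\langle\phi(a)\rangle$, $AB=\langle ab\rangle$, and $a\mid b\Rightarrow\phi(a)\mid\phi(b)$. Your version is slightly more complete in two respects. Defining $\Phi$ on sets makes well-definedness immediate, and you also check that the order is reflected via $\phi^{-1}$, which an isomorphism of ordered semigroups requires and which the paper's proof omits.
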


\begin{proof}
	Let $\phi \colon R \to S$ be a ring isomorphism. Define a map 
	$\psi \colon \mathcal{I}(R) \to \mathcal{I}(S)$ by
	\[
	\psi(A) = \langle \phi(a) \rangle , \quad \text{for } A = \langle a \rangle \in \mathcal{I}(R).
	\]
	To show that $\langle \phi(a) \rangle = \phi(\langle a \rangle)$, consider 
	$y \in \langle \phi(a) \rangle$  then $y = s\,\phi(a)$ for some $s \in S$. 
	Since $\phi$ is surjective, there exists $r \in R$ such that $s = \phi(r)$ and so 
	$y = \phi(r)\phi(a) = \phi(ra).$ Also since $ra \in \langle a \rangle$, we have $y \in \phi(\langle a \rangle)$  thus $\langle \phi(a) \rangle \subseteq \phi(\langle a \rangle)$.
	
	Conversely, for $x \in \phi(\langle a \rangle),\quad x = \phi(ra)$ for some $r \in R$, and so $	x = \phi(r)\phi(a),$
	which implies $x \in \langle \phi(a) \rangle$. Thus 
	$\phi(\langle a \rangle) \subseteq \langle \phi(a) \rangle$, and therefore
$	\langle \phi(a) \rangle = \phi(\langle a \rangle).$
	
	Next, suppose $\psi(A) = \psi(B)$, then
$	\langle \phi(a) \rangle = \langle \phi(b) \rangle$
	implies $\phi(\langle a \rangle) = \phi(\langle b \rangle)$. 
	Since $\phi$ is injective it follows that $\langle a \rangle = \langle b \rangle$, 
	so $A = B$, ie., $\psi$ is injective. For $T = \langle t \rangle \in \mathcal{I}(S)$, 
	since $\phi$ is surjective, there exists $d \in R$ such that $t = \phi(d)$
	showing that $\psi$ is surjective. and hence $\psi$ is bijective.
	
	To verify that $\psi$ is a semigroup homomorphism, consider $A = \langle a \rangle,\, B = \langle b \rangle\, \in \mathcal{I}(R)$; 
	\[
	\psi(AB)
	= \psi(\langle a \rangle \langle b \rangle)
	= \psi(\langle ab \rangle)
	= \langle \phi(ab) \rangle
	= \langle \phi(a)\phi(b) \rangle
	= \psi(A)\psi(B),
	\]
	and so $\phi(ab) = \phi(a)\phi(b)$.
	Finally, $\psi$ preserves the order, for $A = \langle a \rangle\, B = \langle b \rangle,\, \in \mathcal{I}(R)$ such that 
	$A \preccurlyeq_R B$. Then $a \mid b$, so $b = ax$ for some $x \in R$, hence $\phi(b) = \phi(ax) = \phi(a)\phi(x),$
	which shows that $\phi(a) \mid \phi(b)$, ie., $	\langle \phi(a) \rangle \preccurlyeq_S \langle \phi(b) \rangle,$ implies, $\psi(A) \preccurlyeq_S \psi(B).$
	
	Therefore, $\psi$ is a bijective order-preserving semigroup homomorphism, consequently,
	\[
	(\mathcal{I}(R), \cdot, \preccurlyeq_R)
	\cong
	(\mathcal{I}(S), \cdot, \preccurlyeq_S).
	\]
\end{proof}

\begin{remark}
The reverse implication of the preceding theorem need not hold in general.
To illustrate, consider the fields $\mathbb{R}$ and $\mathbb{Q}$, each one is a commutative principal ideal ring with identity whose only ideals of are
$\langle 0 \rangle $ and $\langle 1 \rangle$.
As a consequence the associated ordered semigroups of ideals are isomorphic
\[
(\mathcal{I}(\mathbb{R}), \cdot, \preccurlyeq_{\mathbb{R}})
\cong
(\mathcal{I}(\mathbb{Q}), \cdot, \preccurlyeq_{\mathbb{Q}})
\]
nevertheless the rings $\mathbb{R}$ and $\mathbb{Q}$ themselves are not isomorphic.
\end{remark}
An inverse transversal of the ordered semigroup of ideals of $R$ is identified below.

\begin{proposition}
	Let $R$ be a commutative principal ideal ring with unity and 
	$\big(\mathcal{I}(R), \cdot, \preccurlyeq\big)$ be the ordered semigroup of
	ideals. Then $\{ \langle 1\rangle \}$ is an inverse transversal of
	$\big(\mathcal{I}(R), \cdot, \preccurlyeq\big)$.
\end{proposition}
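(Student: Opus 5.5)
We verify the four conditions of Definition \ref{definvtrsl} for $S_0=\{\langle 1\rangle\}=\{R\}$. The key observation is that $\langle 1\rangle$ is the least element of $(\mathcal{I}(R),\preccurlyeq)$: for any $A=\langle a\rangle$ we have $1\mid a$, so $\langle 1\rangle\preccurlyeq A$. Moreover, since $1\mid r$ for every $r\in R$, any $B\preccurlyeq\langle 1\rangle$ satisfies $b\mid 1$. Then $b$ is a unit and $B=\langle 1\rangle$.

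\textbf{Condition (1).} Since $\langle 1\rangle\langle 1\rangle=\langle 1\rangle$, the set $S_0$ is a one-element subsemigroup, and it inherits the order from $\mathcal{I}(R)$. Its only element $\langle 1\rangle$ satisfies $\langle 1\rangle\preccurlyeq\langle 1\rangle\langle 1\rangle\langle 1\rangle$, so $\langle 1\rangle\in V_{\le}(\langle 1\rangle)$. Any two ordered inverses computed inside $S_0$ are equal, hence $\mathcal{H}$-related. So $S_0$ is an inverse ordered subsemigroup.

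\textbf{Condition (2).} We need $(S_0]=S_0$. By the remark above, the only ideal below $\langle 1\rangle$ is $\langle 1\rangle$ itself, so this holds.

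\textbf{Condition (3).} For arbitrary $A=\langle a\rangle$, we check that $\langle 1\rangle\in V_{\le}(A)$. First, $A\preccurlyeq A\langle 1\rangle A=\langle a^2\rangle$, because $a\mid a\cdot a$. Second, $\langle 1\rangle\preccurlyeq\langle 1\rangle A\langle 1\rangle=A$, because $\langle 1\rangle$ is least. Hence $V_{\le}(A)\cap S_0\neq\varnothing$.

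\textbf{Condition (4).} This is immediate, since $S_0$ has a single element.

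The only point needing care is the direction of the order. With $A\preccurlyeq B$ meaning $a\mid b$, the element $\langle 1\rangle$ is the bottom of the order, not the top, even though $\langle 1\rangle$ is the largest ideal under inclusion. With that settled, all four checks are routine divisibility observations.
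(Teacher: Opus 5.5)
Your proposal is correct and takes essentially the same route as the paper. Both arguments verify $A \preccurlyeq A\langle 1\rangle A$ via $a \mid a^2$ and $\langle 1\rangle \preccurlyeq \langle 1\rangle A \langle 1\rangle = A$ via $1 \mid a$, note that $\langle 1\rangle\langle 1\rangle = \langle 1\rangle$, and observe that $(S_0] = S_0$. Your version is a bit more explicit: you justify the downward closure by noting that $b \mid 1$ forces $b$ to be a unit, and you check condition (4) separately, whereas the paper just calls these clear.
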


\begin{proof}
	Consider $ A = \langle a\rangle \in \mathcal{I}(R)$. Since $R = \langle 1 \rangle$ and 
	$a = a \cdot 1 \cdot a$, it follows that $a \mid a1a$, hence $A \preccurlyeq ARA.$
	Similarly, $1 \mid 1a1$, which implies $R \preccurlyeq RAR,$ thus $R$ is as an ordered inverse for every element of $\mathcal{I}(R)$. 	
	Moreover, $	RR = R \quad \text{and} \quad R \preccurlyeq R,$
	so $R$ forms an inverse ordered subsemigroup of $\big(\mathcal{I}(R), \cdot, \preccurlyeq\big)$. Clearly $(R] = \{R\},$ and all the conditions for an inverse transversal (see Definition~\ref{definvtrsl}) are satisfied. Therefore,
$	\mathcal{I}(R)_0 = \{R\}.$
\end{proof}

We next examine the connection between regularity of commutative
principal ideal ring and regularity of its semigroup of ideals.

\begin{proposition}
	Let $R$ be a commutative principal ideal ring with unity and $a \in R$.
	Then $a$ is von Neumann regular in $R$ if and only if
	$\langle a \rangle$ is a regular\index{Regular} element of the semigroup
	$\mathcal{I}(R)$.
\end{proposition}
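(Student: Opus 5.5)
The plan is to read ``regular element of the semigroup $\mathcal{I}(R)$'' in the classical (non-ordered) sense: $A$ is regular if there is $X \in \mathcal{I}(R)$ with $A = AXA$. Both directions then reduce to the product formula $\langle a\rangle\langle b\rangle = \langle ab\rangle$ for principal ideals of a commutative ring, established at the start of Section~\ref{section}, together with commutativity of $R$.

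For the forward direction, suppose $a$ is von Neumann regular, so $a = axa$ for some $x \in R$. Put $X = \langle x\rangle$. Applying the product formula twice gives
\[
\langle a\rangle\langle x\rangle\langle a\rangle = \langle axa\rangle = \langle a\rangle .
\]
Hence $A = AXA$, and $\langle a\rangle$ is regular in $\mathcal{I}(R)$.

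For the converse, suppose $\langle a\rangle = X\langle a\rangle$-style regularity holds, i.e.\ $\langle a\rangle = \langle a\rangle X \langle a\rangle$ for some $X \in \mathcal{I}(R)$. Since $R$ is a principal ideal ring, $X = \langle x\rangle$ for some $x \in R$. The product formula then gives $\langle a\rangle = \langle axa\rangle$. In particular $a \in \langle axa\rangle$, so $a = r(axa)$ for some $r \in R$. By commutativity, $r(axa) = a(rx)a$, so $a = a y a$ with $y = rx$, and $a$ is von Neumann regular.

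The only step needing care is this last one. Equality of ideals yields only $a = r\,axa$ with an extra multiplier $r$, not directly an equation of the form $a = aya$. Commutativity absorbs $r$ into the middle factor, and this is exactly where the hypotheses on $R$ are used: commutativity for both the product formula and the absorption, and the principal ideal property to write $X = \langle x\rangle$. I expect no further obstacles.
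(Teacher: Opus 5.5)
Your proof is correct and follows the paper's argument essentially verbatim. The forward direction uses $\langle a\rangle\langle x\rangle\langle a\rangle=\langle axa\rangle$. The converse writes $X=\langle x\rangle$, extracts $a=r(axa)$ from the equality of ideals, and absorbs $r$ into the middle factor by commutativity.
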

\begin{proof}
	Let $a$ be a von Neumann regular element of $R$. Then there exists $x \in R$ such that
$	a = axa.$
	Consequently $	\langle a\rangle = \langle axa\rangle
	= \langle a\rangle \langle x\rangle \langle a\rangle$
	which shows $\langle a\rangle$ is a regular element of the semigroup
	$\mathcal{I}(R)$.\\
		
	\noindent
	Conversely suppose that $\langle a\rangle$ is regular in $\mathcal{I}(R)$.
	Then there exists $x \in R$ satisfying
$	\langle a\rangle
	= \langle a\rangle \langle x\rangle \langle a\rangle
	= \langle axa\rangle.$
	Since $R$ is a commutative principal ideal ring, equality of principal ideals yields
$	a = r(axa) = a(rx)a
	\quad \text{for some } r \in R.$
Hence $a$ is von Neumann regular in $R$.
\end{proof}
In the following theorem we investigate Green’s relations\index{Green’s relations}
on the ordered semigroup $\big(\mathcal{I}(R), \cdot , \preccurlyeq\big)$.
\begin{theorem}
	Let $R$ be a commutative principal ideal ring with unity and 
	$\big(\mathcal{I}(R), \cdot , \preccurlyeq\big)$  the ordered semigroup of ideals of $R$.
	Then Green’s relations $\mathcal{L}, \mathcal{R}, \mathcal{H}, \mathcal{D}$, and $\mathcal{J}$ 	on $\mathcal{I}(R)$ are all universal relations.
\end{theorem}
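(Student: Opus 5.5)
The plan is to use the characterization of Green's relations in a regular ordered semigroup, namely the Proposition (cf.\ \cite{mpk}) from the preliminaries. We have already shown that $\big(\mathcal{I}(R), \cdot, \preccurlyeq\big)$ is a regular ordered semigroup, so that Proposition applies. It says that $A \,\mathscr{L}\, B$ if and only if there exist $X, Y \in \mathcal{I}(R)$ with $A \preccurlyeq XB$ and $B \preccurlyeq YA$. The analogous statement holds for $\mathscr{R}$ with the factors on the right.

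The key step is to exhibit such $X$ and $Y$ for arbitrary $A = \langle a\rangle$ and $B = \langle b\rangle$. Since $\preccurlyeq$ is defined by divisibility of generators, we want $a \mid xb$ and $b \mid ya$. The natural choice is $X = \langle a\rangle = A$ and $Y = \langle b\rangle = B$. Then $XB = \langle ab\rangle$, and since $a \mid ab$ we get $A \preccurlyeq XB$; symmetrically, $b \mid ba$ gives $B \preccurlyeq YA$. Hence $A \,\mathscr{L}\, B$ for all $A, B$.

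Because $R$ is commutative, $\mathcal{I}(R)$ is a commutative semigroup, so the same choice gives $A \preccurlyeq BX$ and $B \preccurlyeq AY$, and $\mathscr{R}$ is universal as well. (Equally, $X = \langle 0\rangle$ would work, since every element divides $0$.) Consequently $\mathscr{H} = \mathscr{L} \cap \mathscr{R}$ is universal, and $\mathscr{D} = \mathscr{L} \vee \mathscr{R}$, which contains $\mathscr{L}$, is universal too.

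For $\mathscr{J}$ the characterization is not stated, so I will argue directly. In a regular ordered semigroup $A \in (\mathcal{I}(R) A \mathcal{I}(R)]$, so $I(A) = (\mathcal{I}(R)A\mathcal{I}(R)]$. Since $\mathscr{L} \subseteq \mathscr{J}$ always holds, the universality of $\mathscr{J}$ follows from that of $\mathscr{L}$. Concretely, $L(A) = L(B)$ gives $A \preccurlyeq XB$, which lies in $I(B)$, so $I(A) \subseteq I(B)$, and symmetrically $I(B) \subseteq I(A)$.

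The main point needing care is the direction of the order. Here $A \preccurlyeq B$ means $a \mid b$, which corresponds to reverse inclusion of ideals, and this is exactly what makes "larger" elements easy to reach. I therefore need to check that the cited Proposition is applied with the inequality oriented as stated, $a \le xb$; with that orientation the argument is immediate.
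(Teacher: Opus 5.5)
Your proof is correct and is essentially the paper's argument: both show that every $A$ lies below some element of $\mathcal{I}(R)B$, so each principal one-sided ideal $(\mathcal{I}(R)B]$ is all of $\mathcal{I}(R)$. The paper takes the witness $\langle 0\rangle = \langle 0\rangle B$, the greatest element for $\preccurlyeq$ (your parenthetical alternative), and then uses commutativity to make all Green's relations coincide; you instead take $AB$ via $a \mid ab$, apply the $\mathscr{L}$-characterization, and handle $\mathscr{H}$, $\mathscr{D}$ and $\mathscr{J}$ explicitly.
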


\begin{proof}
	The existence of the zero ideal together with the
	commutativity of $R$ forces all Green’s relations on
	$\mathcal{I}(R)$ to be universal. For, consider the ordered semigroup
	$\big(\mathcal{I}(R), \cdot , \preccurlyeq\big)$, since $a \mid 0$ for every $a \in R$, it follows that
$\langle a \rangle \preccurlyeq \langle 0 \rangle \quad \text{for all } a \in R.$
Thus for every $A \in \mathcal{I}(R)$, we have $A \preccurlyeq 0$, where $0 = \langle 0 \rangle$.
	
Let $(\mathcal{I}(R)A]$ denote the ideal of the ordered semigroup $\mathcal{I}(R)$ generated by $A$. Since $0 \in \mathcal{I}(R)$ and
$0A = 0,$ 	it follows that $0 \in (\mathcal{I}(R)A]$. 
	As ideals in an ordered semigroup are downward closed with respect to $\preccurlyeq$ and since $A \preccurlyeq 0$ for all $A \in \mathcal{I}(R)$, we obtain
$(\mathcal{I}(R)0] = \mathcal{I}(R).$
	Hence, the ordered semigroup $\big(\mathcal{I}(R), \cdot , \preccurlyeq\big)$ admits no proper ideals.
	Since $R$ is commutative the semigroup $\big(\mathcal{I}(R), \cdot\big)$ is also commutative, therefore, all Green’s relations coincide; that is,
	\[
	\mathcal{L} = \mathcal{R} = \mathcal{J} = \mathcal{H} = \mathcal{D}
	\quad \text{on } \mathcal{I}(R).
	\]
	Moreover, for any $A \in \mathcal{I}(R)$, the corresponding Green’s class is the entire semigroup:
	\[
	\mathcal{L}_{A}
	= \mathcal{R}_{A}
	= \mathcal{J}_{A}
	= \mathcal{H}_{A}
	= \mathcal{D}_{A}
	= \mathcal{I}(R).
	\]
Therefore, all Green’s relations on $\big(\mathcal{I}(R), \cdot , \preccurlyeq\big)$ are universal.
\end{proof}
Next we provide certain regularity conditions on $\big(\mathcal{I}(R), \cdot , \preccurlyeq\big)$.
\begin{theorem}
	Let $R$ be a commutative principal ideal ring with unity.  
	Then the ordered semigroup
	$\big(\mathcal{I}(R), \cdot , \preccurlyeq \big)$ satisfies the following properties:
	\begin{enumerate}
		\item it is intra-regular
		\item it is an inverse ordered semigroup
		\item every element is an ordered idempotent
		\item it is completely regular
		\item it is group-like
		\item it is a Clifford ordered semigroup.
	\end{enumerate}
\end{theorem}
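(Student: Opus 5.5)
The whole argument rests on one observation: the zero ideal $0=\langle 0\rangle$ is the greatest element of $(\mathcal{I}(R),\preccurlyeq)$, since $a\mid 0$ for every $a\in R$. It is also a multiplicative zero, because $0A=A0=0$. So for any $A\in\mathcal{I}(R)$ and any product $P$ in which $0$ occurs as a factor, we get $P=0$ and hence $A\preccurlyeq P$. Each of the six properties asks for an inequality of the form ``$A$ lies below some product involving chosen elements'', so the plan is to put $0$ into every free slot. For the conditions stated entirely in terms of $A$ and not involving $0$, I will instead use divisibility directly.

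\textbf{Items (1), (4), (5), (6).} For intra-regularity I take $X=Y=0$, so that $XA^2Y=0$ and therefore $A\preccurlyeq XA^2Y$. For complete regularity I take the middle element to be $0$, giving $A\preccurlyeq A^2\,0\,A^2=0$. For the group-like property, given $A,B$, I choose $X=Y=0$, so $A\preccurlyeq 0=XB=BY$. For the Clifford condition, the preceding theorem already shows that $\mathcal{I}(R)$ is a regular ordered semigroup. Given $A$ and an ordered idempotent $E$, I take $U=V=0$, so that $AE\preccurlyeq 0=EUA$ and $EA\preccurlyeq 0=AVE$.

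\textbf{Item (3).} For $E=\langle e\rangle$ the condition $e\mid e^2$ holds trivially, since $e^2=e\cdot e$. Hence $E\preccurlyeq E^2$, and every element is an ordered idempotent.

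\textbf{Item (2)} is the main obstacle. The definition requires that any two ordered inverses $A',A''\in V_{\leq}(A)$ be $\mathcal{H}$-related. I will not analyse $V_{\leq}(A)$ at all. Instead I invoke the preceding theorem on Green's relations, which says that $\mathcal{H}$ is the universal relation on $\mathcal{I}(R)$; any two elements, in particular any two ordered inverses, are then $\mathcal{H}$-related. I will check that $V_{\leq}(A)$ is nonempty, since the previous proposition exhibits $R=\langle 1\rangle$ (and also $0$) as an ordered inverse. This makes the statement non-vacuous. The only delicate point is whether the definition intends $\mathcal{H}$ to be taken in the ambient ordered semigroup; if so, universality of $\mathcal{H}$ settles it directly. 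As a sanity check of that theorem, I would also verify $\mathcal{L}$ by hand: $A\preccurlyeq 0\cdot B$ and $B\preccurlyeq 0\cdot A$, so $A\,\mathcal{L}\,B$ by the proposition characterizing Green's relations in a regular ordered semigroup. By commutativity the same holds for $\mathcal{R}$, and hence for $\mathcal{H}$.
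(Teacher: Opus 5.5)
Your proof is correct. For items (2) and (3) it is the paper's argument. The paper also checks $a\mid a^{2}$ for (3). For (2) it likewise relies on $\mathcal{I}(R)$ forming a single $\mathcal{H}$-class, noting along the way, from $a\mid aa'a$ and $a'\mid a'aa'$, that every element is an ordered inverse of every other.

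For (1), (4), (5) and (6) you take a different route. The paper verifies each inequality by a divisibility computation in the commutative ring:
\begin{itemize}
\item $xa^{2}y=a(xay)$ for intra-regularity;
\item $a\mid a^{2}xa^{2}$ for complete regularity;
\item $a\mid ab$ and $a\mid ba$ for the group-like condition, i.e.\ it takes $X=Y=A$;
\item $ae\mid eua$ and $ea\mid ave$ for the Clifford condition.
\end{itemize}
This shows the required inequalities hold for \emph{every} choice of the auxiliary elements, but it uses commutativity of $R$ at each step.

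You instead use only that $\langle 0\rangle$ is both the greatest element and a multiplicative zero, and you place it in every free slot. This needs nothing beyond $a\mid 0$ and $0A=A0=0$. It also shows that these four properties hold in any ordered semigroup with an absorbing greatest element. The same is true of (2), since such an element already forces all Green's relations to be universal. So these properties are degenerate consequences of that one feature rather than of the ring structure. Only (3) genuinely uses the ring, via $e\mid e^{2}$; an absorbing top element alone does not force $e\preccurlyeq e^{2}$.
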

 
\begin{proof}
	\begin{enumerate}
		\item Consider $a, x, y \in R$, we have $xa^{2}y = a(xay)$, hence $a \mid xa^{2}y$. Therefore, $A \preccurlyeq XA^{2}Y,$
	where $A = \langle a \rangle$, $X = \langle x \rangle$, and $Y = \langle y \rangle$.
	Thus $\big(\mathcal{I}(R), \cdot , \preccurlyeq\big)$ is intra-regular.

	\item For any $a,a' \in R$, we have $a \mid a a' a$ and $a' \mid a' a a'$.
	Consequently, $A \preccurlyeq A A'A
	\quad \text{and} \quad
	A'\preccurlyeq
	A'AA'$, where $A = \langle a \rangle, A' = \langle a' \rangle$. Thus every element is an ordered inverse of every other element and
	$\mathcal{I}(R)$ consists of a single $\mathcal{H}$-class.
	Hence it is an inverse ordered semigroup.
	
	\item Since $a \mid a^{2}$ for all $a \in R$, we obtain $A \preccurlyeq  A^{2}$, where $A = \langle a \rangle$, therefore every element of $\mathcal{I}(R)$ is an ordered idempotent.
	
	\item For all $x \in R$, we have $a \mid a^{2} x a^{2}$ which yields $A \preccurlyeq A^{2}X A^{2}$, where $A = \langle a \rangle, X = \langle x \rangle $.
	Hence $\big(\mathcal{I}(R), \cdot , \preccurlyeq\big)$ is completely regular.
	
	\item For $a,b \in R$, since $a \mid ab$ and $a \mid ba$, we have $A \preccurlyeq AB$ and $A \preccurlyeq BA$, where $A = \langle a \rangle $ and $B = \langle b \rangle$,
	showing that the ordered semigroup is group-like.
	
	\item Since every element of $\mathcal{I}(R)$ is an ordered idempotent
	$ E = \langle e \rangle \in \mathcal{I}(R)$.
	For any $a \in R$, if $ae \mid eua$ and $ea \mid ave$ for all $u,v \in R$, then $AE \preccurlyeq EUA $ and $EA \preccurlyeq AVE $, where $A = \langle a \rangle$, $ U = \langle u \rangle$ and $V = \langle v \rangle$.
\end{enumerate}
	Therefore
	$\big(\mathcal{I}(R), \cdot , \preccurlyeq\big)$ is a Clifford ordered semigroup.
\end{proof}

\begin{example} Ordered semigroup of ideals  $(\mathcal{I}(\mathbb{Z}), \cdot , \preccurlyeq)$; of $\mathbb{Z}$.\\
	Clearly $n$ and $-n$ generate the same principal ideal in $\mathbb{Z}$, thus 
	\[
	\mathcal{I}(\mathbb{Z}) = \{\, \langle n \rangle \mid n \in \mathbb{N} \cup \{0\} \,\}
	\]
	is a semigroup under ideal multiplication.
	In particular for any $\langle m \rangle, \langle n \rangle \in \mathcal{I}(\mathbb{Z})$,
	their product is given by
$	\langle m \rangle \cdot \langle n \rangle = \langle mn \rangle$
	and the order relation $\preccurlyeq$ on $\mathcal{I}(\mathbb{Z})$ by
	\[
	\langle n \rangle \preccurlyeq \langle m \rangle
	\quad \text{if and only if} \quad n \mid m \; \text{in}\, \mathbb{Z}.
	\]
	Let $\mathbb{N}^{0} = \mathbb{N} \cup \{0\}$ and define a mapping
	\[
	\varphi : \mathcal{I}(\mathbb{Z}) \longrightarrow \mathbb{N}^{0},
	\quad 	\varphi(\langle n \rangle) = n.
	\]
	It is straightforward to verify that $\varphi$ is a bijection preserving
	both the semigroup operation and the order relation.
	Hence
	\[
	(\mathcal{I}(\mathbb{Z}), \cdot , \preccurlyeq)
	\cong
	(\mathbb{N}^{0}, \cdot , \mid).
	\]
\end{example}
\begin{example} Ordered semigroup of ideals $(\mathcal{I}(\mathbb{Z}_{n}), \cdot , \preccurlyeq)$ of $\mathbb{Z}_{n}$.\\
	Each  ideal of the ring $\mathbb{Z}_n$ of integers modulo $n$  is generated by a divisor of $n$ (see\cite{punin}). Consequently
	\[
	\mathcal{I}(\mathbb{Z}_n)
	=
	\{\, \langle d \rangle \mid d \text{ is a  divisor of } n \,\}.
	\]
	For any $a \in \mathbb{Z}_n$, $	\langle a \rangle = \langle \gcd(a,n) \rangle $ and for 
	$\langle d_1 \rangle, \langle d_2 \rangle \in \mathcal{I}(\mathbb{Z}_n)$, their product under ideal multiplication is given by
	\[
	\langle d_1 \rangle \cdot \langle d_2 \rangle
	= \langle d_1 d_2 \rangle
	= \langle \gcd(d_1 d_2, n) \rangle.
	\]
	Define a partial order $\preccurlyeq$ on $\mathcal{I}(\mathbb{Z}_n)$ by
	\[
	\langle d_1 \rangle \preccurlyeq \langle d_2 \rangle
	\quad \text{if and only if} \quad
	d_1 \mid d_2 \; \text{ in}\; \mathbb{Z}_n,
	\]
	equivalently $\langle d_1 \rangle \preccurlyeq \langle d_2 \rangle$ if and only if
	$d_2 = m d_1$ for some integer $m$. Since both $d_1$ and $d_2$ divide $n$, the
	integer $m$ is also a divisor of $n$.
	Let $D(n)$ denote the set of all positive divisors of $n$.
	Define a binary operation $*$ on $D(n)$ by
	\[
	d_1 * d_2 = \gcd(d_1 d_2, n),
	\quad \text{for all } d_1, d_2 \in D(n).
	\]
	The divisibility relation defines a partial order on $D(n)$ and this order is compatible with the operation $*$. Thus $(D(n), *, \mid)$ is an ordered semigroup.
	
	The a map
	\[
	\varphi : \mathcal{I}(\mathbb{Z}_n) \longrightarrow D(n),
	\qquad
	\varphi(\langle d \rangle) = d.
	\]
	is well defined, bijective and preserves both the semigroup operation
	and the order relation.
	Hence,
	\[
	(\mathcal{I}(\mathbb{Z}_n), \cdot , \preccurlyeq)
	\cong
	(D(n), *, \mid).
	\]

\end{example}

\begin{example}  Ordered semigroup $(\mathcal{I}(F[x]), \cdot , \preccurlyeq)$ of ideals of $F[x]$.\\

Consider the polynomial ring $F[x]$, over a field $F$. Then
\[
\mathcal{I}(F[x])
=
\{\, \langle f(x) \rangle \mid f(x) \in F[x] \,\}.
\]
is an ideal where the 
ideal multiplication is given by
$\langle f(x) \rangle \cdot \langle g(x) \rangle
= \langle f(x) g(x) \rangle $, for any $\langle f(x) \rangle, \langle g(x) \rangle \in \mathcal{I}(F[x])$
and the order relation $\preccurlyeq$ on $\mathcal{I}(F[x])$ by
\[
\langle f(x) \rangle \preccurlyeq \langle g(x) \rangle
\quad \text{if and only if} \quad
f(x) \mid g(x)\; \text{in}\; F[x]. 
\]
Observe that for any  $ a \in F^{*}  =F \setminus\{0\} $, then 
$\langle a \rangle = F[x]$, while $\langle 0 \rangle = \{0\}$ is the zero
ideal. Moreover two principal ideals $\langle f(x) \rangle$ and
$\langle g(x) \rangle$ coincide if and only if $f(x)$ and $g(x)$ differ by a
nonzero scalar multiple. 
The relation $\sim$ on  $F[x]$, defined by
\[
p(x) \sim q(x)
\iff
p(x) = a q(x) \quad \text{for some } a \in F^{*}.
\]
is an equivalence relation on $F[x]$. and $F[x]/\sim
= \{\, [f(x)] \mid f(x) \}$
denote the set of equivalence classes of $\sim$.
For $[f(x)], [g(x)] \in F[x]/\sim$, define multiplication by
$[f(x)] [g(x)] = [f(x) g(x)].$
Then $F[x]/\sim$ is a semigroup and we introduce a partial order on $F[x]/\sim$ by setting
$[f(x)] \mid [g(x)]
\iff
f(x) \mid g(x) \text{ in }F[x].$
Now the map
\[
\psi:\mathcal{I}(F[x])\longrightarrow F[x]/\sim
\quad \text{by} \quad
\psi(\langle f(x)\rangle)=[f(x)].
\]
is well defined, bijective and preserves both multiplication and
order. Consequently
\[
(\mathcal{I}(F[x]),\cdot,\preccurlyeq)
\cong
(F[x]/\sim,\cdot,\mid).
\]
\end{example}
\section{Category of ideals of a principal ideal ring}\label{pir}

In \cite{calcutta}, we introduced the categories $\mathbb{L}_R$ and $\mathbb{R}_R$, 
whose objects are the left and right ideals respectively of a Noetherian ring $R$ with unity and morphisms left and right $R$-linear maps and, it is  shown that these are 
preadditive categories with a zero object which can be viewed as full subcategories of the category of $R$-modules. Further, these are caategories with subobjects and their morphisms admit factorization. 

In the following, the ring $R$  we discuss is a commutative principal ideal ring with unity, 
so every ideal of $R$ is generated by a single element, hence $R$ is Noetherian. Since $R$ is commutative the notions of left and right ideals coincide, therefore, the categories $\mathbb{L}_R$ and $\mathbb{R}_R$ are identical and we denote this common category by
$\mathbb{I}_R$.  
Each objects in $\mathbb{I}_R$ has the form $A = \langle a \rangle$ for some $a \in R$ and any element in $A$ is written as $ra$ for some $r \in R$. For ideals $A = \langle a \rangle$ and $B = \langle b \rangle$, the set of morphisms $\mathbb{I}_R(A,B)$ consist of all $R$-linear maps from $A$ to $B$, thus, a function $f : A \to B$ is a morphism if and only if
\[
f(r_1 a + r_2 a) = (r_1 + r_2) f(a)
\quad \text{for all } r_1, r_2 \in R,
\]
since each ideal is generated by a single element any such morphism is completely determined by the image of the generator.  
Also $f : A \to B$ is $R$-linear, and $f(a) \in B = \langle b \rangle$ implies there exists $c \in R$ such that $f(a) = cb$, hence, for every $ra \in A,\quad f(ra) = rcb.$
Thus, every morphism from $A$ to $B$ is uniquely determined by a choice of $c \in R$, 
further, being a Noetherian ring, the category $\mathbb{L}_R$ admits subobjects and the subobject relation in $\mathbb{L}_R$ given by the inclusion of left ideals. 

Next we describe the category $\mathbb{I}_R$,  when $R$ is a commutative principal ideal ring and proceed to show that it is a category with subobjects. For $A = \langle a \rangle,\, B = \langle b \rangle \in \mathbb{I}_R$,  define
\[
A \subseteq B \iff a = rb \text{ for some } r \in R
\]
which is clearly a partial order on objects of $\mathbb{I}_R$.
For $A \subseteq B$, the map $j_{(A,B)} : A \to B$ given by  
\[
j_{(A,B)}(ra) = ra, 
\] 
is $R$-linear and hence a morphism in $\mathbb{I}_R$. Moreover, it is a monomorphism and termed it as the inclusion of $A$ into $B$.
Consider the subcategory $\mathcal{P}$ of $\mathbb{I}_R$ having the same objects as $\mathbb{I}_R$ and whose morphisms are precisely these inclusion maps $j_{(A,B)}$, since there is at most one such morphism between any two objects, $\mathcal{P}$ is a strict preorder.
Finally, suppose that $j_{(A,D)}$ and $j_{(B,D)}$ are morphisms in $\mathcal{P}$ and that
\[
j_{(A,D)} = h \, j_{(B,D)}
\]
for some morphism $h : A \to B$ in $\mathbb{I}_R$, as both $j_{(A,D)}$ and $j_{(B,D)}$ are inclusion maps into $D = \langle d \rangle$, it follows that $h$ must coincide with the inclusion map from $A$ into $B$. 
Therefore, the pair $(\mathbb{I}_R, \mathcal{P})$ satisfies the axioms of a category with subobjects.

\section{Correspondence of ideal categories and ordered semigroup structures of  PIR}

In the following we explore the connection between categorical  and ordered semigroup structures arising from the ideals of a commutative principal ideal ring.

Consider the commutative principal ideal ring $R$ with unity. It is already seen that $\mathbb{I}_R$, the category of ideals of $R$ is a category with subobjects and  $\mathcal{I}(R)$ equipped with ideal multiplication and order relation $\preccurlyeq$ defined by divisibility of generators is an ordered semigroup.

To formalize the relation between these structures,  we construct a category $\mathcal{C}$ 
whose objects are those of $\mathcal{I}(R)$, and for $A = \langle a \rangle,\, B = \langle b \rangle \in \mathcal{I}(R) $, define a morphism
\[
A \to B
\quad \text{if and only if} \quad
A \preccurlyeq B,
\]
i.e., whenever $a \mid b$ denoted by $f_{(A,B)}$. Since the divisibility relation determines the existence of a morphism there can be at most one morphism between any two objects.
The composition in $\mathcal{C}$ is induced by the transitivity of divisibility, that is when 
$f_{(A,B)} : A \to B
\quad \text{and} \quad
f_{(B,C)} : B \to C,$
then $a \mid b$ and $b \mid c$, which implies $a \mid c$, accordingly, 
$
f_{(A,B)}\, f_{(B,C)} = f_{(A,C)}.$
 For each object $A = \langle a \rangle$, the relation $a \mid a$ yields the identity morphism $f_{(A,A)}$ . The identity laws follow immediately from the definition of composition and associativity of composition is a direct consequence of the transitivity of divisibility. Thus, $\mathcal{C}$ is a category in which there is at most one morphism between any two objects; ie., it is a strict preorder category.

Now, consider the map
$F : \mathcal{C} \to \mathbb{I}_R$, whose object mad is identity and for a morphism 
$f_{(A,B)} : A \to B$ in $\mathcal{C}$, define
$F(f_{(A,B)}) = j_{(B,A)},$
where $j_{(B,A)} : B \to A$ is the inclusion map in $\mathbb{I}_R$. It can be seen that $F$ is a contravariant functor. For, consider any object $A$, we have
$F(f_{(A,A)}) = j_{(A,A)},$
which is the identity morphism on $A$ in $\mathbb{I}_R$.
Next, consider morphisms $f_{(A,B)} : A \to B,\, f_{(B,C)} : B \to C$, then
$F(f_{(A,B)}) = j_{(B,A)}, \quad F(f_{(B,C)}) = j_{(C,B)}$, and their composition in $\mathbb{I}_R$ is
$j_{(C,B)} \, j_{(B,A)} = j_{(C,A)},$
which is the inclusion of $C$ into $A$. On the other hand,
$F(f_{(A,B)} \circ f_{(B,C)}) = F(f_{(A,C)}) = j_{(C,A)}$, thus,
\[
F(f_{(A,B)} \, f_{(B,C)}) = F(f_{(B,C)}) \, F(f_{(A,B)}),
\]
showing that composition is reversed.

Therefore, we conclude that there is a natural correspondence between the ordered semigroup structure $\mathcal{I}(R)$ and the categorical structure $\mathbb{I}_R$ given by the contravariant functor $F$ from $\mathcal{C}$ to $\mathbb{I}_R$.

\end{document}